\newcommand\be{\begin{equation}}
\newcommand\ee{\end{equation}}
\newcommand\bea{\begin{eqnarray}}
\newcommand\eea{\end{eqnarray}}
\newcommand\bi{\begin{itemize}}
\newcommand\ei{\end{itemize}}
\newcommand\ben{\begin{enumerate}}
\newcommand\een{\end{enumerate}}
\newcommand\bc{\begin{center}}
\newcommand\ec{\end{center}}
\newcommand\ba{\begin{array}}
\newcommand\ea{\end{array}}
\newcommand{\Z}{\ensuremath{\mathbb{Z}}}
\newcommand{\Q}{\mathbb{Q}}
\newcommand{\js}[1]{ { \underline{#1} \choose p} }
\newtheorem{thm}{Theorem}[section]
\newtheorem{lem}[thm]{Lemma}
\newtheorem{rek}[thm]{Remark}
\newcommand{\ncr}[2]{{#1 \choose #2}}
\newcommand{\twocase}[5]{#1 \begin{cases} #2 & \text{#3}\\ #4
&\text{#5} \end{cases}   }
\newcommand{\gep}{\epsilon}
\newcommand{\must}{\mu_{\rm st}}
\numberwithin{equation}{section}
\begin{document}
%\frontmatter

\title{A combinatorial identity for studying Sato-Tate type problems}

\author{Steven J. Miller}\email{Steven.J.Miller@williams.edu}
\address{Department of Mathematics and Statistics, Williams College, Williamstown, MA 01267}

\author{M. Ram Murty}\email{murty@mast.queensu.ca}
\address{Department of Mathematics, Queen's University,
Kingston, Ontario, K7L 3N6, Canada}

\author{Frederick Strauch}\email{fws1@williams.edu}
\address{Department of Physics, Williams College, Williamstown, MA 01267}

\subjclass[2000]{05A40, 05A10 (primary) 33C05, 11K38, 14H52, 11M41 (secondary).} \keywords{Binomial Identities, Hypergeometric Function, Sato-Tate, Erd\"{o}s-Turan, Effective Equidistribution}

\thanks{The first named author would like to thank Cameron and Kayla Miller for quietly sleeping on him while many of the calculations were done. Much of this paper was written when the first two authors attended the Graduate Workshop on $L$-functions and Random Matrix Theory at Utah Valley University in 2009, and it is a pleasure to thank the organizers. The first named author was partly supported by NSF grant DMS0600848. The second named author was partially supported by an NSERC Discovery grant.}

\maketitle

\begin{abstract} We derive a combinatorial identity which is useful in studying the distribution of Fourier coefficients of $L$-functions by allowing us to pass from knowledge of moments of the coefficients to the distribution of the coefficients.
\end{abstract}

%%%%%%%%%%%%%%%%%%%%%%%%%%%%%%%%%%%%%%%%%%%%%%%%%%%%%%%%%%%%%%%%%%%%%%%%%%%%%%%%%%%%%%%%%%%%%%%
%%%%%%%%%%%%%%%%%%%%%%%%%%%%%%%%%%%%%%%%%%%%%%%%%%%%%%%%%%%%%%%%%%%%%%%%%%%%%%%%%%%%%%%%%%%%%%%
%%%%%%%%%%%%%%%%%%%%%%%%%%%%%%%%%%%%%%%%%%%%%%%%%%%%%%%%%%%%%%%%%%%%%%%%%%%%%%%%%%%%%%%%%%%%%%%

\section{Introduction}

Recently M. Ram Murty and K. Sinha \cite{MS} proved effective equidistribution results showing the eigenvalues of Hecke operators on the space of cusp forms of weight $k$ and level $N$ agree with the Sato-Tate distribution. Their proof relied on bounding the discrepancy through an application of the Erd\"os-Turan inequality and estimates of exponential sums. In \cite{MM} the first two authors generalized their techniques to the Fourier coefficients of families of elliptic curves. The purpose of this note is to describe an interesting combinatorial identity needed in that analysis. 

We first describe the problem that motivated this work. Recall that if $E: y^2 = x^3 + Ax + B$ with $A, B \in \Z$ is an elliptic curve over $\Q$, the associated $L$-function is \be L(E,s)\ =\ \sum_{n=1}^\infty \frac{a_E(n)}{n^s} \ = \ \prod_p \left(1 - \frac{a_E(p)}{p^s} + \frac{\chi_0(p)}{p^{2s-1}}\right)^{-1},\ee with $\Delta=-16(4A^3+27B^2)$ the discriminant of $E$, $\chi_0$ the principal character modulo $\Delta$, and \bea a_E(p) & \ =\ & p - \#\{(x,y) \in (\Z/p\Z)^2:  y^2 \equiv x^3 + Ax + B \bmod p\} \nonumber\\ &=& - \sum_{x\bmod p} \js{x^3 + Ax + B}. \eea By Hasse's bound we know $|a_E(p)| \le 2\sqrt{p}$, so we may write $a_E(p) = 2\sqrt{p} \cos \theta_E(p)$, where we may choose $\theta_E(p) \in [0,\pi]$. The distribution of the $a_E(p)$'s are related to numerous problems of interest; for example, by the Birch and Swinnerton-Dyer conjecture the order of vanishing of $L(E,s)$ at the central point $s=1/2$ is conjecturally equal to the group of rational solutions of $E$. See \cite{Sil1,Sil2,ST} for more on elliptic curves.

In the analysis in \cite{MM}, one needs to understand sums of $\cos(m\theta_n)$, with $n$ ranging over a family of $L$-functions. Such estimates exist \cite{Ka,Mic,Ni}, and have been used by others to prove effective equidistribution results for two-parameter families of elliptic curves \cite{BS,Sh1,Sh2}. It is possible to avoid these estimates if instead one uses results of Birch \cite{Bi} for sums of the moments, i.e., sums of $\cos^r(\theta_n)$. While typically these lead to worse results, as there may be situations in future research where only the moments are known  we describe how one may prove effective equidistribution results concerning the distribution of the Fourier coefficients of $L$-functions using just the moments and combinatorics.

The key combinatorial ingredient in \cite{MM} is the following, which is the main result of this paper.

\begin{thm}\label{keycombinatoriallemmaforsuns} Let $m$ be an integer greater than or equal to 1. Then \be \twocase{\sum_{r=0}^m (-1)^r \ncr{m}{r} \ncr{m+r}{r} \frac1{(r+1)(m+r)} \ = \ }{1/2}{{\rm if} $m=1$}{0}{{\rm if} $m \ge 2$.} \ee \end{thm}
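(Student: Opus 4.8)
The plan is to recognize the left-hand side as a terminating Gaussian hypergeometric series and then evaluate it by a classical summation theorem. Denote the summand by $t_r = (-1)^r \ncr{m}{r}\ncr{m+r}{r}\frac{1}{(r+1)(m+r)}$ and the whole sum by $S$, noting that $t_0 = 1/m$. The first step is to compute the ratio of consecutive terms. Using $\ncr{m}{r+1}/\ncr{m}{r} = (m-r)/(r+1)$ together with $\ncr{m+r+1}{r+1}/\ncr{m+r}{r} = (m+r+1)/(r+1)$, the expression simplifies after cancellation to
\be \frac{t_{r+1}}{t_r} \ = \ \frac{(r-m)(r+m)}{(r+1)(r+2)}. \ee
This is precisely the term ratio of a hypergeometric series with upper parameters $-m$ and $m$, a single lower parameter $2$, and argument $1$, so
\be S \ = \ \frac{1}{m}\, {}_2F_1(-m,\, m;\, 2;\, 1). \ee

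Since the upper parameter $-m$ is a negative integer the series terminates, so there is no convergence issue and the Chu--Vandermonde form of Gauss's theorem, ${}_2F_1(-m,b;c;1) = (c-b)_m/(c)_m$, applies directly. Taking $b=m$ and $c=2$ yields
\be {}_2F_1(-m,\,m;\,2;\,1) \ = \ \frac{(2-m)_m}{(2)_m} \ = \ \frac{(2-m)(3-m)\cdots 1}{(m+1)!}, \ee
where $(x)_m$ denotes the rising factorial. The denominator $(2)_m = (m+1)!$ never vanishes, so the value is controlled entirely by the numerator $(2-m)_m$.

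Reading off the two cases is then immediate. The product $(2-m)_m$ runs over the $m$ consecutive integers from $2-m$ up to $1$; as soon as $m \ge 2$ this range includes $0$, the product vanishes, and hence $S = 0$. When $m=1$ the single factor $(2-1)_1 = 1$ contains no zero, giving ${}_2F_1(-1,1;2;1) = 1/2$ and therefore $S = \tfrac1m\cdot\tfrac12 = \tfrac12$. I do not anticipate a genuine obstacle: the only substantive work is the bookkeeping in the term-ratio computation, and the one point requiring care is that for $m \ge 2$ the Gamma-function form of Gauss's theorem reads $\Gamma(2)^2/(\Gamma(2+m)\Gamma(2-m))$, a ratio in which $\Gamma(2-m)$ has a pole, which is exactly what forces the $0$; using the terminating Chu--Vandermonde form sidesteps this cleanly. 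As a self-contained alternative that avoids quoting Gauss's theorem, one can write $\frac{1}{(r+1)(m+r)} = \int_0^1\!\int_0^1 x^r y^{m+r-1}\,dx\,dy$, identify the resulting inner sum $\sum_r (-1)^r \ncr{m}{r}\ncr{m+r}{r}(xy)^r$ as the shifted Legendre polynomial $P_m(1-2xy)$, and evaluate the double integral by the orthogonality of the Legendre polynomials on $[0,1]$; this route is more elementary in its ingredients but computationally heavier.
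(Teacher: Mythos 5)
Your proposal is correct and is essentially the paper's second proof: both identify the sum as $\tfrac1m\,{}_2F_1(-m,m;2;1)$ and evaluate it by Gauss's summation theorem, the paper via the Gamma-quotient form (reading off the zero from the pole of $\Gamma(2-m)$) and you via the terminating Chu--Vandermonde form, which reaches the same conclusion slightly more cleanly. The term-ratio computation and the case analysis both check out, so no changes are needed.
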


We prove this theorem in \S\ref{sec:combidentities}, and then discuss its application to effective equidistribution in \S\ref{sec:preliminaries}.

%%%%%%%%%%%%%%%%%%%%%%%%%%%%%%%%%%%%%%%%%%%%%%%%%%%%%%%%%%%%%%%%%%%%%%%%%%%%%%%%%%%%%%%%%%%%%%%
%%%%%%%%%%%%%%%%%%%%%%%%%%%%%%%%%%%%%%%%%%%%%%%%%%%%%%%%%%%%%%%%%%%%%%%%%%%%%%%%%%%%%%%%%%%%%%%

\section{Combinatorial Identities}\label{sec:combidentities}

Below we give two different proofs of Theorem \ref{keycombinatoriallemmaforsuns}, each highlighting a different approach to proving combinatorial identities. We first state some needed properties of the binomial coefficients. For $n, r$ non-negative integers we set $\ncr{n}{k} = \frac{n!}{k!(n-k)!}$. We generalize to real $n$ and $k$ a positive integer by setting \be \ncr{n}{k} \ = \ \frac{n(n-1)\cdots (n-(k-1))}{k!}, \ee which clearly agrees with our original definition for $n$ a positive integer and vanishes when $n$ is a non-negative integer less than $k$. Finally, we set $\ncr{n}{0} = 1$ and $\ncr{n}{k} = 0$ if $k$ is a negative integer.

To prove our main result we need the following two lemmas; we follow the proofs in \cite{Ward}.

\begin{lem}[Vandermonde's Convolution Lemma] Let $r,s$ be any two real numbers and $k, m, n$ integers. Then \be \sum_k \ncr{r}{m+k} \ncr{s}{n-k} \ = \ \ncr{r+s}{m+n}. \ee
\end{lem}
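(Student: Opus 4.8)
The plan is to prove the identity first for nonnegative integer parameters $r,s$ by extracting a coefficient from a generating function, and then to bootstrap to all real $r,s$ by a polynomial continuation argument. The whole point of Vandermonde's convolution is that both sides are polynomial in the parameters, so an identity verified on a sufficiently large set of integers must hold everywhere.

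First I would note that, with the conventions fixed above, the sum over $k$ is genuinely finite and of a length independent of $r,s$: the factor $\binom{r}{m+k}$ vanishes unless $m+k\ge 0$ and $\binom{s}{n-k}$ vanishes unless $n-k\ge 0$, so only the indices $-m\le k\le n$ contribute. Hence, for fixed integers $m,n$, the left-hand side is $\sum_{k=-m}^{n}\binom{r}{m+k}\binom{s}{n-k}$, a fixed finite sum in which each $\binom{r}{m+k}$ is a polynomial in $r$ of degree $m+k$ and each $\binom{s}{n-k}$ a polynomial in $s$; thus the left side is a polynomial in the two variables $r,s$, while the right side $\binom{r+s}{m+n}$ is a polynomial of degree $m+n$ in $r+s$.

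Next I would establish the identity when $r$ and $s$ are nonnegative integers, where I can use the binomial theorem directly. From $(1+x)^r(1+x)^s=(1+x)^{r+s}$, comparing the coefficient of $x^{m+n}$ on each side gives $\sum_{j}\binom{r}{j}\binom{s}{m+n-j}=\binom{r+s}{m+n}$. Re-indexing by $j=m+k$ (so that $m+n-j=n-k$) turns the left-hand side into exactly $\sum_k\binom{r}{m+k}\binom{s}{n-k}$, which is the assertion for integer $r,s$. I would then remove the integrality hypothesis in two steps: fixing a nonnegative integer $s$, both sides are polynomials in $r$ agreeing at every nonnegative integer $r$, hence agreeing identically in $r$; and then, fixing an arbitrary real $r$, both sides are polynomials in $s$ agreeing at every nonnegative integer $s$, hence agreeing for all real $s$ as well.

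The main obstacle is not a computational one but a matter of care in this passage from integer to real parameters: one must first secure that the summation has a fixed finite length so that each side is honestly a polynomial, and then invoke (once in $r$ and once in $s$) the fact that a one-variable polynomial vanishing at infinitely many points is identically zero, rather than attempting any combinatorial interpretation of the non-integral ``binomial coefficients'' $\binom{r}{m+k}$. By contrast, the generating-function step is entirely routine once the coefficient bookkeeping and the reindexing $j=m+k$ are arranged correctly.
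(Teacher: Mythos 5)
Your proposal is correct and follows essentially the same route as the paper: establish the identity for nonnegative integer $r,s$ by comparing coefficients in $(1+x)^r(1+x)^s=(1+x)^{r+s}$, then extend to real parameters by the polynomial argument. If anything, your version is slightly more careful than the paper's on two points --- you justify finiteness of the sum via the vanishing of binomial coefficients with negative lower index (which is the correct reason when $r,s$ are not integers), and you perform the polynomial continuation properly one variable at a time --- but these are refinements of, not departures from, the same argument.
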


\begin{proof} Note that the summand is zero if either $m+k > r$ or $n-k > s$, and thus it is a finite sum over $k$. It suffices to prove the claim when $r, s$ are integers. The reason is that both sides are polynomials, and if the polynomials agree for an infinitude of integers then they must be identical. By changing $n$ and $k$, we see it suffices to consider the special case $m=0$, in which case we are reduced to showing \be\label{eq:ncrcombidentityvandermonde} \sum_k \ncr{r}{k} \ncr{s}{n-k} \ = \ \ncr{r+s}{n}. \ee Consider the polynomial \be\label{eq:usefulpolyvandermondeconv} (x+y)^r (x+y)^s\ =\ (x+y)^{r+s}.\ee If we use the binomial theorem to expand the left hand side of \eqref{eq:usefulpolyvandermondeconv}, we get the coefficient of the $x^n y^{r+s-n}$ is the left hand side of \eqref{eq:ncrcombidentityvandermonde}; this follows from looking at all the ways we could get an $x^n y^{r+s-n}$, which involves summing over the coefficients of $x^{k} y^{r-k}$ times the coefficients of $x^{n-k} y^{s-n+k}$. Similarly, if we use the binomial theorem we find the coefficient of $x^n y^{r+s-n}$ is the right hand side of \eqref{eq:usefulpolyvandermondeconv}. This proves \eqref{eq:ncrcombidentityvandermonde}, which completes the proof.
\end{proof}

\begin{lem}\label{lem:neededlemcomb} Let $\ell, m, s$ be non-negative integers. Then \be \sum_k (-1)^k \ncr{\ell}{m+k} \ncr{s+k}{n} \ = \ (-1)^{\ell+m} \ncr{s-m}{n-\ell}. \ee \end{lem}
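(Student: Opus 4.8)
My plan is to prove the identity by the generating-function (coefficient-extraction) method, which parallels exactly the proof just given for Vandermonde's convolution. The one fact I need is that for any integer $a$,
\[
\ncr{a}{n} \ = \ [x^n]\,(1+x)^a,
\]
where $[x^n]$ extracts the coefficient of $x^n$ and, when $a<0$, the symbol $(1+x)^a$ is read as the generalized binomial series $\sum_{k\ge0}\ncr{a}{k}x^k$; this is consistent with the paper's extended definition of $\ncr{a}{n}$ and with the convention $\ncr{a}{n}=0$ for $n<0$. Applying this to the factor $\ncr{s+k}{n}$ and noting that the sum over $k$ is finite (the summand vanishes unless $0\le m+k\le \ell$), I may pull the extraction outside the sum:
\[
\sum_k (-1)^k \ncr{\ell}{m+k}\ncr{s+k}{n} \ = \ [x^n]\,(1+x)^s \sum_k (-1)^k \ncr{\ell}{m+k}(1+x)^k .
\]

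Next I would evaluate the inner sum in closed form by the substitution $j=m+k$ together with the (finite) binomial theorem. Since $(-1)^{j-m}=(-1)^{m}(-1)^{j}$ and $(1+x)^{j-m}=(1+x)^{-m}(1+x)^{j}$, the inner sum becomes
\[
(-1)^{m}(1+x)^{-m}\sum_{j}\ncr{\ell}{j}\bigl(-(1+x)\bigr)^{j} \ = \ (-1)^{m}(1+x)^{-m}\bigl(1-(1+x)\bigr)^{\ell} \ = \ (-1)^{m+\ell}(1+x)^{-m}x^{\ell}.
\]
Substituting this back and combining the powers of $(1+x)$ gives
\[
(-1)^{m+\ell}\,[x^n]\,x^{\ell}(1+x)^{s-m} \ = \ (-1)^{m+\ell}\,[x^{n-\ell}]\,(1+x)^{s-m} \ = \ (-1)^{\ell+m}\ncr{s-m}{n-\ell},
\]
which is exactly the claimed value, completing the argument.

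The computation itself is routine; the only points that demand care — and which I regard as the real obstacle — are the sign and index bookkeeping under the shift $j=m+k$ (which is precisely what produces the $(-1)^{m}$ and the $(1+x)^{-m}$), and the interpretation of the final extraction $[x^{n-\ell}](1+x)^{s-m}$ through the generalized binomial series when $s-m<0$ or $n-\ell<0$, so that it genuinely equals $\ncr{s-m}{n-\ell}$ as the paper defines it. An alternative that stays strictly within the quoted Vandermonde Convolution Lemma is to first reindex by $j=m+k$, then expand $\ncr{s-m+j}{n}$ via Vandermonde, interchange the order of summation, and collapse the resulting inner alternating sum using $\sum_{t}(-1)^{t}\ncr{N}{t}=0$ for $N\ge1$ and $=1$ for $N=0$; there the main obstacle is justifying the intermediate subset identity $\ncr{\ell}{i}\ncr{i}{j}=\ncr{\ell}{j}\ncr{\ell-j}{i-j}$, which is not among the stated lemmas and would need its own short verification. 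For that reason I would prefer the generating-function route above, since it uses nothing beyond the binomial theorem and the generalized-coefficient conventions already in place.
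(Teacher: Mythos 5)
Your proof is correct, but it takes a genuinely different route from the paper's. The paper first applies ``upper negation,'' rewriting $\ncr{s+k}{n}=\ncr{s+k}{s+k-n}=(-1)^{s+k-n}\ncr{-n-1}{s+k-n}$ so that the $(-1)^k$ is absorbed and the sign becomes independent of $k$, and then invokes the Vandermonde Convolution Lemma just proved; your argument instead extracts the coefficient $[x^n]$ and collapses the inner sum with the ordinary binomial theorem, $\sum_j\ncr{\ell}{j}(-(1+x))^j=(-x)^\ell$, never using Vandermonde or the negation identity at all. What your route buys is self-containment and transparency about where the factor $(-1)^{\ell+m}x^\ell(1+x)^{-m}$ comes from; what it costs is that you must work in the ring of formal power series and justify, for possibly negative exponents, that $(1+x)^{a}=\sum_{n\ge0}\ncr{a}{n}x^n$ (as the inverse of $(1+x)^{-a}$ when $a<0$) and that $(1+x)^{s}(1+x)^{k}=(1+x)^{s+k}$ for $k<0$ --- both standard since $1+x$ is a unit, but worth one explicit sentence, since $k$ does range down to $-m$ and $s+k$ can be negative. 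You correctly flag the remaining conventions ($[x^{n-\ell}]=0$ for $n<\ell$ matching $\ncr{s-m}{n-\ell}=0$), and your closing remark about the alternative Vandermonde-based route is essentially the paper's strategy in disguise. Either proof is acceptable; yours is arguably the cleaner of the two.
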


\begin{proof} Using $\ncr{a}{b} = \ncr{a}{a-b}$, we rewrite $\ncr{s+k}{n}$ as $\ncr{s+k}{s+k-n}$, and we then rewrite $\ncr{s+k}{s+k-n}$ as $(-1)^{s+k-n} \ncr{-n-1}{s+k-n}$ by using the extension of the binomial coefficient, where we have pulled out all the negative signs in the numerators. The advantage of this simplification is that the summation index is now only in the denominator; further, the power of $-1$ is now independent of $k$. Factoring out the sign, our quantity is equivalent to \bea & & (-1)^{s-n} \sum_k \ncr{\ell}{m+k} \ncr{-n-1}{s+k-n} \ = \ (-1)^{s-n} \sum_k \ncr{\ell}{\ell-m-k} \ncr{-n-1}{s+k-n},\nonumber\\ \eea where we again use $\ncr{a}{b} = \ncr{a}{a-b}$. By Vandermonde's Convolution Lemma, this equals $(-1)^{s-n}$ $\ncr{\ell-n-1}{\ell-m-n+s}$. Using $\ncr{s-m}{\ell-m-n+s}  = \ncr{s-m}{n-\ell}$ and collecting powers of $-1$ completes the proof (note $(-1)^{\ell-m} = (-1)^{\ell+m}$).
\end{proof}

Using the above two lemmas, we can now prove our main result.

\begin{proof}[First Proof of Theorem \ref{keycombinatoriallemmaforsuns}] The case $m=1$ follows by direct evaluation. Consider now $m \ge 2$.
We have \bea S_m & \ = \ & \sum_{r=0}^m (-1)^r \ncr{m}{r} \ncr{m+r}{r} \frac1{(r+1)(m+r)} \nonumber\\ &= \ & \sum_{r=0}^m (-1)^r \ncr{m}{r} \frac{m+1}{m+1} \ncr{m+r}{r} \frac1{(r+1)(m+r)} \nonumber\\ &\ = \ &
\sum_{r=0}^m (-1)^r \frac{m! (m+1)}{(r+1)\cdot r!m!} \frac1{m+1} \ \frac{(m+r)(m+r-1)!}{r! m \cdot (m-1+r)!}  \frac1{m+r} \nonumber\\ &\ = \ & \sum_{r=0}^m (-1)^r \ncr{m+1}{r+1} \ncr{m-1+r}{r} \frac{1}{m(m+1)}  \nonumber\\ &\ = \ & \frac1{m(m+1)} \sum_{r=0}^m (-1)^r \ncr{m+1}{r+1} \ncr{m-1+r}{m-1}. \eea We change variables and set $u=r+1$; as $r$ runs from $0$ to $m$, $u$ runs from $1$ to $m+1$. To have a complete sum, we want $u$ to start at $0$; thus we add in the $u=0$ term, which is $\ncr{m-2}{m-1}$. As $m \ge 2$, this is 0 from the extension of the binomial coefficient (this is the first of two places where we use $m \ge 2$). Our sum $S_m$ thus equals \bea S_m & \ = \ & -\frac1{m(m+1)} \sum_{u=0}^{m+1} (-1)^u \ncr{m+1}{u} \ncr{m-2+u}{m-1}. \eea We now use Lemma \ref{lem:neededlemcomb} with $k=u$, $m=0$, $\ell = m+1$, $s=m-2$ and $n=m-1$; note the conditions of that lemma require $s$ to be a non-negative integer, which translates to our $m \ge 2$. We thus find \be S_m \ = \ -\frac1{m(m+1)} (-1)^{m+1} \ncr{m-2}{-2} \ = \ 0, \ee which completes the proof.
\end{proof}

We give another proof of Theorem \ref{keycombinatoriallemmaforsuns} below using hypergeometric functions, highlighting other approaches to proving combinatorial identities.

\begin{proof}[Second Proof of Theorem \ref{keycombinatoriallemmaforsuns}] Consider
the hypergeometric function \be\label{eq:hypergeodef} {}_2F_1(a,b,c;z) \ = \
\frac{\Gamma(c)}{\Gamma(b)\Gamma(c-b)} \int_0^1 \frac{t^{b-1} (1-t)^{c-b-
1}dt}{(1-tz)^a}. \ee The following identity for the normalization constant of the
Beta function is crucial in the expansions: \be B(x,y) \ = \ \int_0^1 t^{x-1} (1-
t)^{y-1}dt \ = \ \frac{\Gamma(x)\Gamma(y)}{\Gamma(x+y)}. \ee We can use the
geometric series formula to expand \eqref{eq:hypergeodef} as a power series in
$z$ involving Gamma factors,
\be \label{eq:hypergeodef2} {}_2F_1(a,b,c;z) \ = \ \frac{\Gamma(c)}{\Gamma(a)
\Gamma(b)} \sum_{n=0}^{\infty}
\frac{\Gamma(a+n)\Gamma(b+n)}{\Gamma(c+n)} \frac{z^n}{n!}. \ee
Rewriting $\ncr{m}{r}$ as $(-1)^r \ncr{r-m-1}{r}$,  $S_m$ can be written
\be S_m\ =\ \frac{1}{m! (-m-1)!} \sum_{r=0}^{\infty} \frac{(r-m-1)! (r+m-
1)!}{(r+1)!} \frac{1}{r!}, \ee where we have formally extended the series to
$\infty$ as the coefficients will vanish for $r \ge m+1$.  By comparing the two
infinite series and using the fact that $z! = \Gamma(z+1)$, we see that if we take $a=-m$,
$b=m$, $c=2$, $n=r$ and $z=1$, after some simple algebra we obtain
\be\label{eq:hypergeomprooflemmaa3} S_m\ =\ \frac{\Gamma(m) {}_2F_1(-
m,m,2;1)}{\Gamma(2) \Gamma(1+m)} \ = \ \frac{\Gamma(m)}{\Gamma(1+m)
\Gamma(2+m) \Gamma(2-m)}, \ee where the last step uses \be
{}_2F_1(a,b,c;1)\ =\ \frac{\Gamma(c) \Gamma(c-a-b)}{\Gamma(c-a)
\Gamma(c-b)}, \ee which follows from the normalization constant of the Beta
function. Note that the right hand side of \eqref{eq:hypergeomprooflemmaa3} is
$1/2$ when $m=1$ and $0$ for $m \ge 2$ because for such $m$,
$1/\Gamma(2-m) = 0$ due to the pole of $\Gamma(2-m)$.

\end{proof}

\begin{rek} It is also possible to prove Theorem \ref{keycombinatoriallemmaforsuns} through symbolic
manipulations. Using the results from \cite{PS,PSR}, one may input this into a
Mathematica package, which outputs a proof (though not all the steps).  The
reasoning behind this automated proof method is described in \cite{PWZ}, and many
of the identities for hypergeometric functions can be interpreted in a very
computational manner.  These results are also useful in random walk
calculations in physics (quantum and classical), and reduction to the
hypergeometric function is a convenient first step towards continuum limits or
long-time asymptotics.

\end{rek}

%%%%%%%%%%%%%%%%%%%%%%%%%%%%%%%%%%%%%%%%%%%%%%%%%%%%%%%%%%%%%%%%%%%%%%%%%%%%%%%%%%%%%%%%%%%%%%%%%%%%%%%%%
%%%%%%%%%%%%%%%%%%%%%%%%%%%%%%%%%%%%%%%%%%%%%%%%%%%%%%%%%%%%%%%%%%%%%%%%%%%%%%%%%%%%%%%%%%%%%%%%%%%%%%%%%

\section{Effective Equidistribution}\label{sec:preliminaries}

For a sequence of numbers $x_n$ modulo 1, a measure $\mu$ and an interval $I \subset [0,1]$, let \bea N_I(V_p) & \ = \ & \#\{n \le V_p: x_n \in I\} \nonumber\\ \mu(I) & \ =\ & \int_I \mu(t) dt. \eea The discrepancy $D_{I,V_p}(\mu)$ is \bea D_{I,V_p}(\mu) & \ =\  & \left|N_I(V_p) - V_p \mu(I)\right|; \eea with this normalization, the goal is to obtain the best possible estimate for how rapidly $D_{I,V_p}(\mu)/V_p$ tends to 0. A standard approach is to use exponential sums and the Erd\"os-Turan theorem. Modifying the ideas in \cite{MS} (see \cite{MM} for the details), one finds

\begin{thm}\label{thm:MSmainmodified} Let $\{x_n\} \subset [0,1]$ and let the notation be as above. Let $\{c_m\}$ be a sequence of numbers such that $\sum_{m=-\infty}^\infty |c_m|  <  \infty$. Let $||\mu|| = \sup_{x \in [0,1]} |F(x)|$ with $\mu = F(-x)dx$.  Then for any $V_p$ and $M$ the discrepancy satisfies \bea\label{eq:definitiondiscdistanceUS} D_{I,V_p}(\mu)  \le \frac{V_p||\mu||}{M+1}  + \sum_{1 \le m \le M} \left(\frac1{M+1} + \min\left(b-a,\frac{1}{\pi|m|}\right)\right) \left|\sum_{n=1}^{V_p} e(mx_n) - V_p c_m\right|. \nonumber\\ \eea  \end{thm}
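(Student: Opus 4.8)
The plan is to prove this by the classical Beurling--Selberg / Erd\"os--Turan machinery: approximate the indicator $\chi_I$ of $I=[a,b]$ from above and below by trigonometric polynomials of degree $M$ and test these against the empirical distribution of the $x_n$. Concretely, I would invoke the Selberg majorant and minorant: there exist real trigonometric polynomials $S^+_M,S^-_M$ of degree at most $M$ with $S^-_M(x)\le \chi_I(x)\le S^+_M(x)$ for all $x$, whose Fourier coefficients (in the convention $S(x)=\sum_{|m|\le M}\widehat{S}(m)e(mx)$) satisfy $\widehat{S^\pm_M}(0)=(b-a)\pm\frac1{M+1}$ and $|\widehat{S^\pm_M}(m)|\le \min(b-a,\frac1{\pi|m|})+\frac1{M+1}$ for $1\le|m|\le M$. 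Throughout I take $c_m$ to be the Fourier coefficients of the probability measure $\mu$, i.e. $c_m=\int_0^1 e(mx)\mu(x)\,dx$, so that $c_0=\mu([0,1])=1$; this is the link between $\mu$ and $\{c_m\}$ that makes the comparison in the statement meaningful, and it is also exactly what $\frac1{V_p}\sum_n e(mx_n)$ is meant to approximate.

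First I would bound $N_I(V_p)-V_p\mu(I)$ from above. Since $\chi_I\le S^+_M$ and $S^+_M$ has degree $M$,
\[
N_I(V_p)=\sum_{n=1}^{V_p}\chi_I(x_n)\le \sum_{n=1}^{V_p}S^+_M(x_n)=\sum_{|m|\le M}\widehat{S^+_M}(m)\sum_{n=1}^{V_p}e(mx_n).
\]
Writing $\sum_n e(mx_n)=V_pc_m+R_m$ with $R_m=\sum_n e(mx_n)-V_pc_m$, the principal part reassembles as $V_p\sum_m\widehat{S^+_M}(m)c_m=V_p\int_0^1 S^+_M\,d\mu$, whence
\[
N_I(V_p)-V_p\mu(I)\le V_p\int_0^1 (S^+_M-\chi_I)\,d\mu+\sum_{|m|\le M}\widehat{S^+_M}(m)R_m.
\]
The main term is controlled using $S^+_M-\chi_I\ge0$ and $|\mu(x)|=|F(-x)|\le\|\mu\|$: one gets $\int_0^1(S^+_M-\chi_I)\,d\mu\le \|\mu\|\,(\widehat{S^+_M}(0)-(b-a))=\frac{\|\mu\|}{M+1}$, producing the term $\frac{V_p\|\mu\|}{M+1}$. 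In the error sum the $m=0$ term is $\widehat{S^+_M}(0)R_0=\widehat{S^+_M}(0)V_p(1-c_0)=0$, and the remaining terms are bounded by the coefficient estimate together with $|R_{-m}|=|R_m|$. Running the identical argument with $S^-_M$ gives the matching lower bound on $N_I(V_p)-V_p\mu(I)$, and taking the maximum of the two yields Theorem \ref{thm:MSmainmodified}.

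The hard part is really the existence of the extremal polynomials $S^\pm_M$ with the stated Fourier control: this is the content of the Beurling--Selberg extremal problem (equivalently Vaaler's construction), and it is precisely what upgrades the trivial pointwise sandwich $S^-_M\le\chi_I\le S^+_M$ to the sharp main-term constant $\frac1{M+1}$ and the sharp coefficient weight $\min(b-a,\frac1{\pi|m|})+\frac1{M+1}$. The remaining points need only care rather than ingenuity: the pairing of the $\pm m$ frequencies (so that the weighted sum over $1\le|m|\le M$ collapses to the form displayed in the statement), the normalization $c_0=1$ that annihilates the zero-frequency error, and the estimate $\int_0^1(S^\pm_M-\chi_I)\,d\mu\le\frac{\|\mu\|}{M+1}$, which is where the hypothesis on $\|\mu\|$ enters and requires only that the density be bounded in absolute value by $\|\mu\|$.
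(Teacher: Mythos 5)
The paper does not actually prove this theorem --- it is imported from \cite{MS} (as modified in \cite{MM}) --- and the proof there is precisely the Beurling--Selberg/Erd\H{o}s--Tur\'an argument you describe (majorize and minorize $\chi_{[a,b]}$ by degree-$M$ Selberg polynomials, test against the empirical measure, use $c_0=1$ to kill the zero frequency, and absorb the $L^1$ defect of the majorant via $\|\mu\|$), so your approach is essentially the same as the source's. One small point to watch: after pairing the $\pm m$ frequencies your argument naturally produces the sum over $0<|m|\le M$, i.e.\ \emph{twice} the displayed sum over $1\le m\le M$, so you should either keep the sum over both signs or accept the extra factor of $2$ (which is immaterial for the application, but your claim that the sum ``collapses to the form displayed'' is slightly too strong as stated).
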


Let $\must = F(-x)dx$ be the normalized Sato-Tate distribution on $[0,1]$. Its density is \be 2\sin^2(\pi x) \ = \ 1 - \frac12 \left(e(x) + e(-x)\right),\ee which implies that the coefficients of $\must$ are $c_0 = 1$, $c_{\pm 1} = -1/2$ and $c_m = 0$ for $|m| \ge 2$.

We consider the family of all elliptic curves modulo $p$ for $p \ge 5$. We may write these curves in Weierstrass form as $y^2 = x^3 - ax - b$ with $a,b \in \Z/p\Z$ and $4a^3 \neq 27b^2$. The number of pairs $(a,b)$ satisfying these conditions\footnote{If $a = 0$ then the only $b$ which is eliminated is $b=0$. If $a$ is a non-zero perfect square there are two $b$ that fail, while if $a$ is not a square than no $b$ fail. Thus the number of bad pairs of $(a,b)$ is $p$.} is \be V_p \ := \ p(p-1).\ee We use Birch's \cite{Bi} results on the moments of the family of all elliptic curves modulo $p$ (there are some typos in his explicit formulas; we correct these in \cite{MM}); unfortunately, these are results for quantities such as $(2 \sqrt{p} \cos \theta_n)^{2R}$, and the quantity which naturally arises when applying Theorem \ref{thm:MSmainmodified} is $e(mx_n)$ (with $x_n$ running over the normalized angles $\theta_{a,b}(p)/\pi$), specifically \be \left| \sum_{n = 1}^{V_p} e(mx_n) - V_p c_m \right|. \ee By applying some combinatorial identities we are able to rewrite our sum in terms of the moments, which allows us to use Birch's results. The point of this section is not to obtain the best possible error term but rather to highlight how one may generalize and apply the framework from \cite{MS}.

We first set some notation. Let $\sigma_k(T_p)$ denote the trace of the Hecke operator $T_p$ acting on the space of cusp forms of dimension $-2k$ on the full modular group. We have $\sigma_{k+1}(T_p) = O(p^{k + c + \gep})$, where from \cite{Sel} we see we may take $c = 3/4$ (there is no need to use the optimal $c$, as our final result, namely \eqref{eq:Mgivenc}, will yield the same order of magnitude result for $c=3/4$ or $c=0$). Let $\mathcal{M}_p(2R)$ denote the $2R$\textsuperscript{th} moment of $2\cos(\theta_n) = 2\cos(\pi x_n)$ (as we are concerned with the normalized values, we use slightly different notation than in \cite{Bi}): \be \mathcal{M}_p(2R) \ = \ \frac1{V_p} \sum_{n=1}^{V_p} \left(2 \cos(\pi  x_n)\right)^{2R}. \ee

\begin{lem}[Birch]\label{lem:birch} Notation as above, we have \be \mathcal{M}_p(2R) \ = \ \frac1{R+1}\ncr{2R}{R} + O\left(2^{2R} V_p^{-\frac{1-c-\gep}2}\right);\ee we may take $c = 3/4$ and thus there is a power saving.\footnote{Note $\frac1{R+1}\ncr{2R}{R}$ is the $R$\textsuperscript{th} Catalan number. The Catalan numbers are the moments of the semi-circle distribution, which is related to the Sato-Tate distribution by a simple change of variables.}
\end{lem}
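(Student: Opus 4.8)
The plan is to unwind $\mathcal{M}_p(2R)$ into a weighted point count over the family, expand the integrand in Chebyshev polynomials of the second kind so that the leading coefficient is manifestly the Catalan number, and then recognise each of the remaining coefficients as a (normalised) trace of a Hecke operator, which the hypothesis $\sigma_{k+1}(T_p)=O(p^{k+c+\gep})$ controls. In this approach the combinatorial content is elementary; all of the arithmetic is pushed into Birch's identification of these family sums with Hecke traces via Deuring and Eichler--Selberg.

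First I would unwind the definitions. Since $a_E(p)=2\sqrt p\cos\theta_{a,b}(p)$ and $x_n=\theta_n/\pi$, we have $2\cos(\pi x_n)=a_{E_{a,b}}(p)/\sqrt p$, so
\[
\mathcal{M}_p(2R)\ =\ \frac1{V_p}\sum_{(a,b)}\bigl(2\cos\theta_{a,b}\bigr)^{2R},
\]
the sum running over the $V_p=p(p-1)$ admissible pairs. I would then expand the integrand in the basis $U_{2j}(\cos\theta)=\sin((2j+1)\theta)/\sin\theta$; matching coefficients of $e^{2i\ell\theta}$ gives
\[
(2\cos\theta)^{2R}\ =\ \sum_{j=0}^{R}\left(\ncr{2R}{R-j}-\ncr{2R}{R-j-1}\right)U_{2j}(\cos\theta),
\]
whose $j=0$ coefficient is exactly $\ncr{2R}{R}-\ncr{2R}{R-1}=\frac1{R+1}\ncr{2R}{R}$, the Catalan number. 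Because $U_0\equiv1$, the $j=0$ term contributes $\frac1{R+1}\ncr{2R}{R}\sum_{(a,b)}1=\frac1{R+1}\ncr{2R}{R}\,V_p$, which after division by $V_p$ is precisely the claimed main term.

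It then remains to bound, for $1\le j\le R$, the sums $\sum_{(a,b)}U_{2j}(\cos\theta_{a,b})$, and here I would invoke Birch's computation. Grouping the pairs by the value $t=a_p$, the number of curves with $a_p=t$ is, up to normalisation, the Hurwitz--Kronecker class number $H(4p-t^2)$; and since the Gegenbauer polynomial occurring in the Eichler--Selberg trace formula satisfies $P_{2j+2}(t,p)=p^{\,j}U_{2j}(\cos\theta)$, the weighted sum $\sum_t U_{2j}(\cos\theta_t)H(4p-t^2)$ equals, up to the boundary terms of the formula, a constant times $p^{-j}\sigma_{j+1}(T_p)$. The powers of $p$ conspire: the factor $\asymp p$ from the class-number weight and the factor $p^{-j}$ combine with $\sigma_{j+1}(T_p)=O(p^{\,j+c+\gep})$ to give $\sum_{(a,b)}U_{2j}(\cos\theta_{a,b})=O(p^{1+c+\gep})$ uniformly in $j$. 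The coefficients are nonnegative ballot numbers summing to at most $\ncr{2R}{R}\le2^{2R}$, so summing the $j\ge1$ contributions and dividing by $V_p=p(p-1)\asymp p^2$ produces an error of size $O\!\bigl(2^{2R}p^{\,c-1+\gep}\bigr)=O\!\bigl(2^{2R}V_p^{-(1-c-\gep)/2}\bigr)$, as required.

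The main obstacle is the middle step: making rigorous the passage from the geometric family sum $\sum_{(a,b)}U_{2j}(\cos\theta_{a,b})$ to the Hecke trace $\sigma_{j+1}(T_p)$, and in particular controlling the boundary terms of the trace formula uniformly across the whole range $1\le j\le R$, so that the single estimate $\sigma_{k+1}(T_p)=O(p^{k+c+\gep})$ may be pulled out of the sum and paired with the $2^{2R}$ bound on the coefficients. This is exactly the content of Birch's original analysis, which we take as the starting point; the remaining combinatorial and bookkeeping steps above are routine.
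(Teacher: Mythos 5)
Your proposal is correct and is essentially the paper's proof: both reduce the lemma to Birch's moment formula (the identification of the family sums with Hecke traces via Deuring and Eichler--Selberg) together with Selberg's bound $\sigma_{k+1}(T_p)=O(p^{k+c+\gep})$, and your ballot-number coefficients $\ncr{2R}{R-j}-\ncr{2R}{R-j-1}=\frac{2j+1}{R+j+1}\ncr{2R}{R+j}$ are exactly the coefficients appearing in the $S_R^\ast(p)$ formula on p.~59 of \cite{Bi} that the paper quotes directly. The only difference is that you re-derive the Chebyshev decomposition that Birch's formula already encodes, while the paper simply divides his identity by $p^R$ and applies the same trace bound and the same $2^{2R}$ estimate on the coefficient sum.
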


\begin{proof} The result follows from dividing the equation for $S_R^\ast(p)$ on the bottom of page 59 of \cite{Bi} by $p^R$, as we are looking at the moments of the normalized Fourier coefficients of the elliptic curves, and then using the bound $\sigma_{k+1}(T_p) = O(p^{k+c+\gep})$, with $c = 3/4$ admissible by \cite{Sel}. Recall $V_p=p(p-1)$ is the cardinality of the family. We have \bea \mathcal{M}_p(2R) & \ = \ & \frac1{R+1}\ncr{2R}{R} \frac{p(p-1)}{V_p} \nonumber\\ & & \ \ + \ O\left(\sum_{k=1}^R \frac{2k+1}{R+k+1} \ncr{2R}{R+k} \frac{p^{1+c+\gep}}{V_p} + \frac{p}{p^R V_p}\right) \nonumber\\ & = & \frac1{R+1}\ncr{2R}{R} + O\left(2^{2R} V_p^{-\frac{1-c-\gep}2}\right)\eea since $V_p = p(p-1)$.
\end{proof}

A simple argument\footnote{To see that we may match the angles as claimed for the family of all elliptic curves, consider the elliptic curve $y^2 = x^3-ax-b$ with $4a^3\neq 27b^2$. Let $c$ be any non-residue modulo $p$, and consider the curve $y^2 = x^3 - ac^2x - bc^3$. Using the Legendre sum expressions for $a_E(p)$ and $a_{E'}(p)$, using the automorphism $x \to cx$ we see the second equals $\js{c}$ times the first; as we have chosen $c$ to be a non-residue, this means $2\sqrt{p}\cos(\theta_{E'}(p)) = -2\sqrt{p}\cos(\theta_E(p))$, or $\theta_{E'}(p)=\pi-\theta_E(p)$ as claimed.} shows that the normalized angles are symmetric about $1/2$. This implies \be \sum_{n=1}^{V_p} e(mx_n) \ = \ \sum_{n=1}^{V_p} \cos(2\pi mx_n) + i  \sum_{n=1}^{V_p} \sin(2\pi mx_n) \ = \ \sum_{n=1}^{V_p} \cos(2m\theta_n),\ee where the sine piece does not contribute as the angles are symmetric about $1/2$. Thus it suffices to show we have a power saving in \be \left| \sum_{n=1}^{V_p} \cos(2m\theta_n) - V_p c_m\right|. \ee By symmetry, it suffices to consider $m \ge 0$.

\begin{lem}Let $c_0 = 1$, $c_{\pm 1} = -1/2$ and $c_m = 0$ otherwise. There is some $c < 1$ such that \be \left| \sum_{n=1}^{V_p} \cos(2m\theta_n) - V_p c_m\right| \ \ll \ \left(m^2 2^{3m} V_p^{-\frac{1-c-\gep}2}\right); \ee by the work of Selberg \cite{Sel} we may take $c = 3/4$.
\end{lem}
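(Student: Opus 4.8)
The plan is to turn the cosine sum into the even moments that Lemma \ref{lem:birch} controls, by writing $\cos(2m\theta)$ as a polynomial in $\cos\theta$, and then to invoke Theorem \ref{keycombinatoriallemmaforsuns} to show that the resulting main term is exactly $c_m$. The case $m=0$ is immediate, since then the left-hand side is $|V_p-V_p|=0$ and the claimed bound is $0$ as well; so I would assume $m\ge 1$ throughout.

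First I would record the Chebyshev expansion. Since $\cos(2m\theta)=T_{2m}(\cos\theta)$ with $T_{2m}$ an even polynomial, the standard monomial form of the Chebyshev polynomial gives
$$\cos(2m\theta)\ =\ m\sum_{R=0}^m \frac{(-1)^{m-R}}{m+R}\ncr{m+R}{2R}\,(2\cos\theta)^{2R}\ =:\ \sum_{R=0}^m \alpha_{m,R}(2\cos\theta)^{2R},$$
with $\alpha_{m,R}=(-1)^{m-R}\frac{m}{m+R}\ncr{m+R}{2R}$ (one checks this against $\cos2\theta=2\cos^2\theta-1$ and $\cos4\theta=8\cos^4\theta-8\cos^2\theta+1$). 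Summing over $n$ and using $\sum_{n=1}^{V_p}(2\cos\theta_n)^{2R}=V_p\,\mathcal{M}_p(2R)$ together with Lemma \ref{lem:birch} then yields
$$\frac1{V_p}\sum_{n=1}^{V_p}\cos(2m\theta_n)\ =\ \sum_{R=0}^m\alpha_{m,R}\left[\frac1{R+1}\ncr{2R}{R}+O\!\left(2^{2R}V_p^{-\frac{1-c-\gep}{2}}\right)\right].$$

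Next I would isolate the main term, which is the crux of the argument. Using the elementary identity $\ncr{m+R}{2R}\ncr{2R}{R}=\ncr{m}{R}\ncr{m+R}{R}$ and factoring out $(-1)^m m$, the Catalan-weighted sum becomes
$$\sum_{R=0}^m\alpha_{m,R}\frac1{R+1}\ncr{2R}{R}\ =\ (-1)^m m\sum_{R=0}^m(-1)^R\ncr{m}{R}\ncr{m+R}{R}\frac1{(R+1)(m+R)}\ =\ (-1)^m m\,S_m,$$
where $S_m$ is precisely the sum evaluated in Theorem \ref{keycombinatoriallemmaforsuns}. Hence the main term equals $(-1)\cdot1\cdot\tfrac12=-\tfrac12=c_1$ for $m=1$ and $(-1)^m m\cdot0=0=c_m$ for $m\ge2$; in every case it is exactly $c_m$. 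This is the one place where the combinatorial identity is used, and it is what makes the moment method reproduce the correct Sato--Tate Fourier coefficient.

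Finally, subtracting $c_m$ leaves only the error, which is controlled by $\bigl(\sum_{R=0}^m|\alpha_{m,R}|2^{2R}\bigr)V_p^{-\frac{1-c-\gep}{2}}$, so the remaining task is a bound on the coefficient sum. A direct estimate of the Chebyshev coefficients gives $\sum_{R=0}^m|\alpha_{m,R}|2^{2R}\ll m^2 2^{3m}$; in fact this sum is exactly $|T_{2m}(i)|$, which grows like $(1+\sqrt2)^{2m}=(3+2\sqrt2)^m\ll 2^{3m}$, so even the crude bound suffices. This proves the stated discrepancy bound $\ll m^2 2^{3m}V_p^{-\frac{1-c-\gep}{2}}$ for the normalized average (equivalently, a genuine power saving over the trivial bound $V_p$ for the raw sum, once $c<1$, with $c=3/4$ admissible by \cite{Sel}). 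I expect the only delicate step to be matching the reindexed Catalan-weighted sum to $S_m$; once the product identity $\ncr{m+R}{2R}\ncr{2R}{R}=\ncr{m}{R}\ncr{m+R}{R}$ and the extraction of the $(-1)^m m$ prefactor are in hand, Theorem \ref{keycombinatoriallemmaforsuns} closes the gap and the error estimate is routine.
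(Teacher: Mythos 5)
Your proof is correct and follows essentially the same route as the paper: the same expansion of $\cos(2m\theta)$ into even powers of $2\cos\theta$, Birch's moment lemma to replace the empirical moments by Catalan numbers plus a power-saving error, and Theorem \ref{keycombinatoriallemmaforsuns} to identify the main term with $(-1)^m m\,S_m$. The only differences are cosmetic improvements: you treat $m=1$ uniformly via $S_1=\tfrac12$ where the paper does a separate second-moment computation for that case, and your observation that $\sum_{R}|\alpha_{m,R}|\,2^{2R}=|T_{2m}(i)|\ll(3+2\sqrt{2})^{m}$ gives a cleaner error estimate than the paper's Catalan-number bookkeeping.
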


\begin{proof}
The case $m=0$ is trivial. For $m=1$ we use the trigonometric identity $\cos(2\theta_n) = 2 \cos^2(\theta_n) - 1$. As $c_{\pm 1} = -1/2$ we have \bea \sum_{n=1}^{V_p} \cos(2\theta_n) - \frac{V_p}2 & \ = \ & \sum_{n=1}^{V_p} \left[ \left(2\cos^2 \theta_n - 1\right) + \frac12\right] \nonumber\\ & = & \frac12 \sum_{n=1}^{V_p} \left((2 \cos \theta_n)^2 - 1\right) \nonumber\\ & = & \frac12 \sum_{n=1}^{V_p} \left(\frac{(2\sqrt{p}\cos\theta_n)^2}{p} - 1\right). \eea Note the sum of $(2\sqrt{p}\cos \theta_n)^2$ is the second moment of the number of solutions modulo $p$. From \cite{Bi} we have that this is $p + O(1)$; the explicit formula given in \cite{Bi} for the second moment is wrong; see \cite{MM} for the correct statement. Substituting yields \bea \left|\sum_{n=1}^{V_p} \cos(2\theta_n) - \frac{V_p}2\right| & \ \ll \ & O(1). \eea

The proof is completed by showing that $\sum_{n=1}^{V_p} \cos(2m\theta_n) = O_m(V_p^{1/2})$ provided $2 \le m \le M$. In order to obtain the best possible results, it is important to understand the implied constants, as $M$ will have to grow with $V_p$ (which is of size $p^2$). While it is possible to analyze this sum for any $m$ by brute force, we must have $M$ growing with $p$, and thus we need an argument that works in general. As $c_{\pm 1} \neq 0$ but $c_m = 0$ for $|m| \ge 2$, we expect (and we will see) that the argument below does break down when $|m| = 1$.

There are many possible combinatorial identities we can use to express $\cos(2m\theta_n)$ in terms of powers of $\cos(\theta_n)$. We use the following (for a proof, see Definition 2 and equation (3.1) of \cite{Mil}): \be\label{eq:sjmexpansiontwocos2mtheta} 2\cos(2m\theta_n) \ = \ \sum_{r=0}^{m} c_{2m,2r} (2 \cos \theta_n)^{2r}, \ee where $c_{2r} = (2r)!/2$, $c_{0,0} = 0$, $c_{2m,0} = (-1)^m 2$ for $m\ge 1$, and for $1 \le r \le m$ set \be c_{2m,2r} \ = \ \frac{(-1)^{r+m}}{c_{2r}}\prod_{j=0}^{r-1} (m^2 - j^2) \ = \  \frac{(-1)^{m+r}}{c_{2r}}\frac{m \cdot (m+r-1)!}{(m-r)!}. \ee We now sum \eqref{eq:sjmexpansiontwocos2mtheta} over $n$ and divide by $V_p$, the cardinality of the family. In the argument below, at one point we replace $2^{2r}$ in an error term with $2012\frac1{r+1}\ncr{2r}{r} \cdot m^2$; this allows us to pull the $r$\textsuperscript{th} Catalan number, $\frac{1}{r+1}\ncr{2r}{r}$, out of the error term.\footnote{The reason this is valid is that the largest binomial coefficient is the middle (or the middle two when the upper argument is odd). Thus $2^{2r} = (1+1)^{2r} \le (2r+1) \ncr{2r}{r} \le 2(m+1)\ncr{2r}{r}$ (as $m \le r$), and the claim follows from $\frac{2012m^2}{r+1} \ge 2(m+1)$ for $m \ge 2$ and $0 \le r \le m$.} Using
Lemma \ref{lem:birch} we find \bea & & \frac1{V_p} \sum_{n=1}^{V_p} 2\cos(2m\theta_n)  \ = \  \sum_{r=0}^m c_{2m,2r} \frac1{V_p} \sum_{n=1}^{V_p} (2 \cos \theta_n)^{2r} \nonumber\\ & = & \sum_{r=0}^m \left( \frac1{r+1}\ncr{2r}{r} + O\left(2^{2r} V_p^{-\frac{1-c-\gep}2}\right) \right) c_{2m,2r}\nonumber\\ &=& \sum_{r=0}^m \left( \frac1{r+1}\frac{(2r)!}{r!r!} \frac{(-1)^{m+r} 2}{(2r)!}  \frac{m \cdot (m+r)!}{(m-r)! \cdot (m+r)} \right)\nonumber\\ & & \ \ \ \cdot \ \left(1 + O\left(m^2 V_p^{-\frac{1-c-\gep}2} \right)\right) \nonumber\\ & = & (-1)^m 2m \sum_{r=0}^m \left((-1)^r\frac{m!}{r!(m-r)!}   \frac{(m+r)!}{m!r!}\frac{1}{(r+1)(m+r)} \right)\nonumber\\ & & \ \ \ \cdot \ \left(1 + O\left(m^2 V_p^{-\frac{1-c-\gep}2} \right)\right)  \nonumber\\ & = & (-1)^m 2m \sum_{r=0}^m \left((-1)^r\ncr{m}{r}  \ncr{m+r}{r}\frac{1}{(r+1)(m+r)} \right)\nonumber\\ & & \ \ \ \cdot \ \left(1 + O\left(m^2 V_p^{-\frac{1-c-\gep}2} \right)\right). \eea We first bound the error term. For our range of $r$, $\ncr{m+r}{r} \le \ncr{2m}{m} \le 2^{2m}$. The sum of $\ncr{m}{r}$ over $r$ is $2^m$, and we get to divide by at least $m+r \ge m$. Thus the error term is bounded by \be O\left(m^2 2^{3m} V_p^{-\frac{1-c-\gep}2} \right). \ee We now turn to the main term. It it just $(-1)^m 2m$ times the sum in Theorem \ref{keycombinatoriallemmaforsuns}, which is shown in that theorem to equal 0 for any $|m| \ge 2$. Note that without Theorem \ref{keycombinatoriallemmaforsuns}, our combinatorial expansion would be useless.
\end{proof}

\begin{rek} It is possible to get a better estimate for the error term by a more detailed analysis of $\sum_{r\le m} \ncr{m}{r} \ncr{m+r}{r}$; however, the improved estimates only change the constants in the discrepancy estimates, and not the savings. This is because this sum is at least as large as the term when $r \approx m/2$, and this term contributes something of the order $3^{3m/2} / m$ by Stirling's formula. We will see that any error term of size $3^{am}$ for a fixed $a$ gives roughly the same value for the best cutoff choice for $M$, differing only by constants. Thus we do not bother giving a more detailed analysis to optimize the error here. \end{rek}

We now prove effective equidistribution for the family of all elliptic curves.

\begin{thm}\label{thm:allellipticcurves} For the family of all elliptic curves modulo $p$, as $p\to\infty$ we have \be D_{I,V_p}(\must) \ \le\ C \frac{V_p}{\log V_p} \ee for some computable $C$. \end{thm}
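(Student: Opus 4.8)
The plan is to feed the Sato--Tate data into the discrepancy bound of Theorem~\ref{thm:MSmainmodified} and then choose the cutoff parameter $M$ appropriately. Taking $\mu = \must$, we have $c_0 = 1$, $c_{\pm 1} = -1/2$ and $c_m = 0$ for $|m| \ge 2$, while the density $2\sin^2(\pi x)$ gives $\|\must\| = \sup_x |F(x)| = 2$, a harmless constant. Thus the first term of \eqref{eq:definitiondiscdistanceUS} is $\frac{V_p\|\must\|}{M+1} = O(V_p/M)$, and it is precisely this term that will produce the claimed bound once we take $M$ of logarithmic size in $V_p$.

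For the second term, I would first replace each $e(mx_n)$ sum by the cosine sum $\sum_{n=1}^{V_p}\cos(2m\theta_n)$, using the symmetry of the normalized angles about $1/2$ established above (the sine piece vanishes). The preceding lemma then gives
\be
\left|\sum_{n=1}^{V_p} \cos(2m\theta_n) - V_p c_m\right| \ \ll\ m^2 2^{3m} V_p^{-\frac{1-c-\gep}2}
\ee
for every $1 \le m \le M$, where the vanishing of the main term for $m \ge 2$ is exactly the content of Theorem~\ref{keycombinatoriallemmaforsuns}. Since the weights $\frac1{M+1} + \min\!\left(b-a,\frac1{\pi m}\right)$ are uniformly bounded (each is at most $1 + 1/\pi$), the whole second term of \eqref{eq:definitiondiscdistanceUS} is
\be
\ll\ V_p^{-\frac{1-c-\gep}2}\sum_{m=1}^{M} m^2 8^m \ \ll\ M^2 8^M V_p^{-\frac{1-c-\gep}2},
\ee
the rapidly growing sum being dominated by its largest term.

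It remains to choose $M$. I would set $M = A\log V_p$ for a small fixed constant $A > 0$. Then $8^M = V_p^{A\log 8}$, so the second term becomes $\ll (\log V_p)^2\, V_p^{A\log 8 - (1-c-\gep)/2}$. Choosing $A$ small enough that $A\log 8 < (1-c-\gep)/2$ (for $c = 3/4$ any $A < \frac{1}{24\log 2}$ suffices) forces this exponent to be negative, so the second term tends to $0$ and is in particular $O(V_p/\log V_p)$. The first term is then $\frac{2V_p}{A\log V_p + 1} \le C\, V_p/\log V_p$, which dominates and yields the theorem with $C$ computable in terms of $A$ and $c$.

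The crux of the argument is the competition in the second term between the exponential factor $8^M$, coming from the combinatorial expansion of $\cos(2m\theta_n)$ into powers of $\cos\theta_n$, and the power saving $V_p^{-(1-c-\gep)/2}$ coming from Birch's moment estimates. Because the power saving is a fixed positive power of $V_p$ while $M$ need only grow logarithmically, the exponential is comfortably beaten; the only delicacy is to verify that the admissible range of $A$ is nonempty and to track the implied constants so that $C$ is genuinely effective. No optimization of $A$ is required for the stated order of magnitude, which is why the result is insensitive to whether one uses $c = 3/4$ or the sharper $c = 0$.
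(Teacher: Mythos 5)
Your proposal is correct and follows essentially the same route as the paper: insert the lemma's bound $m^2 2^{3m} V_p^{-(1-c-\gep)/2}$ into Theorem \ref{thm:MSmainmodified} and take $M$ of order $\log V_p$, so that the exponential factor $8^M$ stays below the power saving and the first term $V_p/(M+1)$ dominates. The only (immaterial) difference is that the paper balances the two error terms to pick $M = \frac{3-c-\gep}{6}\log V_p$, whereas you simply take any sufficiently small $A$ in $M = A\log V_p$; both give the same $C\,V_p/\log V_p$ order of magnitude.
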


\begin{proof} We must determine the optimal $M$ to use in \eqref{eq:definitiondiscdistanceUS}: \bea  D_{I,V_p}(\must) & \ \ll \ & \frac{V_p}{M+1}  + \sum_{1 \le m \le M} \left(\frac1{M+1} + \frac1{m} \right) \left(m^2 2^{3m} V_p^{-\frac{1-c-\gep}2}\right) \nonumber\\ & \ll & \frac{V_p}{M} +  M 2^{3M} V_p^{-\frac{1-c-\gep}2} \nonumber\\ \eea as $\frac{1}{M+1} \ll \frac1{m}$ and $\sum_{m \le m} 2^{3m} \ll 2^{3M}$. For all $c > 0$ we find the minimum error by setting the two terms equal to each other, which yields \be V_p^{\frac{3-c-\gep}{2}} \ = \ M^2 2^{3M}. \ee For ease of exposition we replace $M^2 2^{3M}$ with $e^{3M}$; this worsens our constant slightly, but does not qualitatively change the result. Equating these errors means we are looking for $M$ such that \be e^{3M} \ = \ e^{\frac{3-c-\gep}{2} \log V_p}, \ee which implies \be\label{eq:Mgivenc} M \ = \ \frac{3-c-\gep}{6} \log V_p. \ee We thus see that we may find a constant $C$ such that \be D_{I,V_p}(\must) \ \le \ C \frac{V_p}{\log V_p}.\ee This yields a logarithm savings in the discrepancy, and proves effective equidistribution.
\end{proof}

%\begin{rek} \textbf{I'M EXHAUSTED AND NOT SEEING / TYPING STRAIGHT. I BELIEVE WE CAN SAVE A POWER OF THE FAMILY CARDINALITY IN EACH MOMENT ESTIMATION; UNFORTUNATELY THE COEFFICIENTS IN THE EXPANSIONS ARE GROWING RAPIDLY, AND THUS WE MUST TAKE $M$ SMALL WITH RESPECT TO $V_p$.}\end{rek}

\end{document}